%% This file is part of the 'Elsarticle Bundle'.
%% ---------------------------------------------
%%
\documentclass[preprint,12pt]{elsarticle}
\usepackage{amsfonts}
\usepackage{amsthm}
\usepackage{amsmath}
\usepackage{amsfonts}
\usepackage{latexsym}
\usepackage{amssymb}
\usepackage[latin1]{inputenc}
\usepackage{verbatim}
\usepackage[nodots]{numcompress}
%%%%%%%%%%%  Environments  %%%%%%%%%%%%%%%%
\newtheorem{theorem}{Theorem}[section]
\newtheorem{lemma}[theorem]{Lemma}
\newtheorem{proposition}[theorem]{Proposition}
\newtheorem{corollary}[theorem]{Corollary}

\theoremstyle{definition}
\newtheorem{definition}{Definition}[section]

\newtheorem{notation}{Notation}[section]

\theoremstyle{remark}

\usepackage[active]{srcltx}

%%%%%%%%%%%%%%%%%%%%%%%%%%%%%%%%%%%%%%%
%%%%%%%%%%% Start of  Doug Macros  %%%%%%%%%%%%%%%%

%%%% Hilbert spaces, subspaces, and B(H)
                        %%% Hilbert space H
                       %%% Hilbert space K
                       %%%% Subspace L
    %%%% B(H)
     %%%% B(K)

%%%%% matrix algebras, spaces, cones
\newcommand\mn{{\rm M}_n}

\newcommand\mk{{\rm M}_k}

            %%%%%%% tri diagonal matrices

%%%%%% groups
\newcommand\un{{\rm U}_n}                %%% unitary group
         %%% free group

%%%%%%%%%%%%%%%%%%%%%%  rings
\def\cn{{\mathbb C}^n}
\def\ct{{\mathbb C}\,[t]}
\def\ut{{\rm UpperToepl}_n}
\def\alg{ {\rm Alg}\,}
\def\Hom{{\rm Hom}\,}

%%%%%% operator systems

                    %%%% kernel J
  %%%% injective envelope of S

%%%%%% tensor cones

%%%%%% tensor products

%%%%%% properties
   %%% Property (*)  [W is in reference to the op sys W_n]

   %%% Property (*)  [S is in reference to the op sys S_n]

%%%%% C*-algebras

                              %%% C*-algebra generated by
               %%% C*-envelope of

 %%%%%%%%%%%%  Article %%%%%%%%%%%%%%%%%%%
\journal{Linear Algebra and its Applications}

\begin{document}

%%%%%%%%%%%% Frontmatter %%%%%%%%%%%%%%
\begin{frontmatter}

\title{A Complete Unitary Similarity Invariant for Unicellular Matrices}
\author{Douglas Farenick}
\address{Department of Mathematics and Statistics,
University of Regina,
Regina, Saskatchewan S4S 0A2, Canada}

%    Information for second author
\author{Tatiana
G.~Gerasimova}
\address{The Faculty of Mechanics and Mathematics,
Kiev National Taras Shevchenko University, Volodymyrska St, 64,
Kiev-33, 01033, Ukraine}

\author{Nadya Shvai}
\address{The Faculty of Mechanics and Mathematics,
Kiev National Taras Shevchenko University, Volodymyrska St, 64,
Kiev-33, 01033, Ukraine}

\begin{abstract}
We present necessary and sufficient conditions for an $n\times n$ complex matrix $B$ to be
unitarily similar to a fixed unicellular (\emph{i.e.}, indecomposable by similarity) $n\times n$
complex matrix $A$.
\end{abstract}

\begin{keyword}
unitary similarity problem \sep unicellular matrix  \sep Toeplitz matrix
\sep Volterra operator
%\MSC code \sep code
\MSC[2010] 15A21  \sep 15A60 \sep 47A05 \sep 47A65 \
\end{keyword}
\end{frontmatter}
%%%%%%%%%%%%%% End of Frontmatter %%%%%%%%%%%%%%%

\section{Introduction}

A fundamental problem in matrix analysis is the \emph{unitary similarity
problem} \cite{arveson1970,shapiro1991}:  Under what necessary and sufficient conditions are two $n\times n$ complex
matrices unitarily similar?
A classical and  purely algebraic solution to this problem due to
Specht \cite{Kaplansky-linear-algebra-book,specht1940}: two $n\times n$ complex
matrices $A$ and $B$ are unitarily
similar if and only if
\begin{equation}\label{e:Specht}
\mbox{Trace}\,\omega(A,A^*)\;=\;\mbox{Trace}\,\omega(B,B^*)\,,
\end{equation}
for every word $\omega$ in two noncommuting variables $x$ and $y$.

In many applications, the data one has about a particular matrix are not based on the trace of the matrix,
but rather on some other analytical information: the spectrum or pseudospectrum, the numerical range
or polynomial numerical hull, the singular values, a unitarily invariant norm, and so forth.
Our concern in the present paper is with a solution to the unitary similarity problem
that is based on a particular choice of unitarily invariant norm.

Let $\mn$ be the space of all $n\times n$ complex matrices; we denote the unitary group by $\un$.
Two matrices $A,B\in\mn$ are unitarily similar, which we express by $A\sim B$,
if there is a $U\in \un$ such that $B=U^*AU$. The norm under study is defined by
\begin{equation}\label{norm}
\|A\|\,=\,\sqrt{\mbox{\rm spr}\,(A^*A)}\,,
\end{equation}
where $\mbox{spr}\,X$ is the spectral radius of $X\in \mn$. The norm \eqref{norm}
has the property that $\|U^*AU\|=\|A\|$, for all $A\in\mn$ and $U\in\un$, and it
coincides with the largest singular value of $A$. Moreover, if $A\in\mn$ is considered as
a linear transformation on the complex inner product space $\mathbb C^n$ with respect
to the standard inner product $\langle\xi,\eta\rangle=\eta^*\xi$, for $\xi,\eta\in\mathbb C^n$, then
\[
\|A\|\;=\;\max_{\langle\xi,\xi\rangle=\langle\eta,\eta\rangle=1}\,|\langle A\xi,\eta\rangle|\,.
\]

Let $\ct$ denote the ring of polynomials with complex coefficients. If $A\sim B$, then necessarily
$\|f(A)\|=\|f(B)\|$ for all $f\in\ct$. Conversely, if $A,B\in\mn$ are such that
$\|f(A)\|=\|f(B)\|$ for all $f\in\ct$, then $A$ and $B$ yield to the same matrix analysis:

(i)  $A$ and $B$ have the same spectrum;

(ii)  $A-zI$ and $B-zI$ have the same condition numbers, for all nonspectral $z$ in the complex plane;

(iii) $A$ and $B$ have the same polynomial numerical hulls and, in particular, the same numerical range;

(iv) $A$ and $B$ have the same spectral set;

(v) $A$ and $B$ have the same pseudospectrum.

Our first objective is to determine cases in which the condition $\|f(A)\|=\|f(B)\|$ for all $f\in\ct$ is also sufficient
for $A\sim B$. In general it will not be so, for if one takes any two nonzero projections (selfadjoint idempotents) $P$
and $Q$, then one has $ \|f(P)\|=\|f(Q)\|$ for all $f\in\ct$, independent
of the ranks of $P$ and $Q$. Therefore, for questions concerning unitary similarity,
the hypothesis  $\|f(A)\|=\|f(B)\|$ for all $f\in\ct$ is relevant only for the
analysis of nonnormal matrices.

\begin{definition}
A matrix $A\in\mn$ is said to be \emph{unicellular} if $A$ is not similar to a matrix $B\in\mn$ of the form
$B=G\oplus H$, for some square matrices $G$ and $H$ of strictly smaller size than $B$.
\end{definition}

Our use of the term unicellular matrix is motivated by the concept of unicellular operator or transformation
in operator theory. If $A\in\mn$ is a unicellular matrix, then $A$ is unicellular in the sense
of \cite[\S9]{Gohberg--Krein-Volterra-book}, \cite[\S 2.5]{Gohberg-Lancaster-Rodman-book}
as a linear transformation on $\mathbb C^n$.
Unicellular matrices are also said to be \emph{indecomposable by similarity}.

In this paper we present two main results. The first, Theorem \ref{toeplitz}, states that the unitary similarity class of any
upper triangular unicellular Toeplitz matrix $R$ is determined by the values of $\|f(R)\|$ for various $f\in\ct$. If one drops the
requirement that $R$ be Toeplitz, yet remain upper triangular and unicellular, then the values of $\|f(R)\|$, for
$f\in\ct$, are insufficient to identify $R$ up to unitary similarity (Proposition \ref{ws}). But with our second main result,
Theorem \ref{main result}, we augment the criterion slightly to obtain necessary and sufficient conditions that classify unicellular
matrices up to unitary similarity (see, also, Proposition \ref{si}).

%%%%%%%%%%%%%%%%%%%%%%%%%%%%%
\section{Upper Triangular Toeplitz Matrices}

\begin{definition} A matrix $R\in\mn$ is an \emph{upper triangular Toeplitz matrix} if
\begin{equation}\label{toeplitz matrix}
R\,=\,\left[ \begin{array}{ccccc} z_0&z_1&a_2&\cdots &z_{n-1} \\
                                                    0 &z_0&z_1&\ddots &\vdots \\
                                                    0 &0&\ddots&\ddots&z_2 \\
                                                    \vdots &&\ddots&\ddots&z_1 \\
                                                   0  &\dots&\dots&0& z_0 \end{array}
       \right]\,,
\end{equation}
for some $z_0,z_1,\dots, z_{n-1}\in\mathbb C$.

The set of all upper triangular Toeplitz matrices $R\in\mn$ is denoted by
$\ut$.
\end{definition}

The main theorem of this section is:

\begin{theorem}\label{toeplitz} Let $R\in\mn$ be an upper triangular Toeplitz matrix \eqref{toeplitz matrix} with $z_1\neq0$.
If $A\in\mn$ is any matrix for which $\|f(A)\|=\|f(R)\|$, for all $f\in\ct$, then $A\sim R$.
\end{theorem}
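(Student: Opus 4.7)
The plan is to first pin down $A$ as a strictly upper triangular nilpotent of index $n$, then reduce to showing that a rescaled shift derived from $A$ is unitarily similar to the standard nilpotent $N \in \mn$, and finally establish that equivalence by orthonormalizing a Krylov basis. Since $f(A) = 0$ iff $\|f(A)\| = 0 = \|f(R)\|$ iff $f(R) = 0$, the matrices $A$ and $R$ have a common minimal polynomial. The hypothesis $z_1 \neq 0$ makes $(R - z_0 I)^{n-1} \neq 0$, so that minimal polynomial is $(z - z_0)^n$, and in particular $A - z_0 I$ is nilpotent of index $n$. By Schur's theorem I may assume $A$ is upper triangular (with $z_0$ on the diagonal), and by replacing $A$ and $R$ with $A - z_0 I$ and $R - z_0 I$ I may further assume $z_0 = 0$: both matrices are then strictly upper triangular nilpotents of index $n$, and $\|f(A)\| = \|f(R)\|$ continues to hold for every $f \in \ct$.

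Because $z_1 \neq 0$, the polynomial $r(z) := z_1 z + z_2 z^2 + \cdots + z_{n-1} z^{n-1}$ admits a compositional inverse $q$ modulo $z^n$, i.e.\ a polynomial with $q(0) = 0$ and $r(q(z)) \equiv z \pmod{z^n}$. Then $R = r(N)$ and $q(R) = N$; setting $S := q(A)$, the matrix $S$ is strictly upper triangular, nilpotent of index $n$, satisfies $r(S) = A$, and
\[
\|f(S)\| \;=\; \|(f \circ q)(A)\| \;=\; \|(f \circ q)(R)\| \;=\; \|f(N)\|
\]
for every polynomial $f$. If a unitary $U$ can be produced with $S = UNU^*$, then $A = r(S) = Ur(N)U^* = URU^*$, completing the proof.

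To prove $S$ unitarily similar to $N$, pick a unit vector $v$ achieving $\|S^{n-1}v\| = \|S^{n-1}\| = \|N^{n-1}\| = 1$. Because $S$ is strictly upper triangular, $S^{n-1}$ has only the $(1,n)$-entry possibly nonzero, so $v$ must be a unimodular scalar multiple of $e_n$; take $v = e_n$ and set $v_k := S^k e_n$ for $k = 0, \ldots, n-1$. The chain $1 = \|S^{n-1}e_n\| \le \|S^{n-1-k}\|\|v_k\| \le \|v_k\| \le 1$ forces $\|v_k\| = 1$ for each such $k$, so $v_0, \ldots, v_{n-2}$ all lie in the defect subspace $V := \ker(I - S^*S)$. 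For $x \in V$ and arbitrary $y \in \cn$, $\langle Sx, Sy\rangle = \langle S^*Sx, y\rangle = \langle x, y\rangle$; iterating this yields the Toeplitz Gram identity $\langle v_j, v_k\rangle = \langle v_0, v_{k-j}\rangle$ for $0 \le j \le k \le n-1$. But strict upper triangularity of $S$ gives $(S^k)_{nn} = 0$ for every $k \ge 1$, so $\langle e_n, v_k\rangle = 0$ for $k \ge 1$, and the Gram matrix of $\{v_k\}$ is the identity. Thus $\{v_0, \ldots, v_{n-1}\}$ is an orthonormal basis, and the unitary $U$ defined by $Ue_{n-k} := v_k$ satisfies $U^*SU = N$.

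The main obstacle is the orthonormality of the Krylov basis. The essential ingredients for it are: (i) the defect set $V$ is a genuine subspace (equal to $\ker(I - S^*S)$), which upgrades pointwise norm-preservation on $v_0, \ldots, v_{n-2}$ to full inner-product preservation under $S$ and produces the Toeplitz Gram structure; and (ii) the Schur reduction makes $S$ strictly upper triangular, forcing $(S^k)_{nn} = 0$ and so killing the off-diagonal Toeplitz entries. The remaining steps---matching minimal polynomials, changing variable from $R$ to $N$ through the invertibility of $r$, and recovering $A = URU^*$---are essentially formal.
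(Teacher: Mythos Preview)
Your proof is correct and takes a genuinely different route from the paper's.

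The paper reduces to the particular Toeplitz generator $Q$ (all ones above the diagonal), uses a numerical-range argument to force the superdiagonal entries of the Schur form of $A$ to equal $1$, and then invokes an inductive lemma (Lemma~\ref{lemma-2}, prepared by the identity $\sum_{k\ge 1}(-1)^{k+1}Q^k=S$) to pin down the remaining entries column by column. Your argument instead reduces to the standard shift $N$ via the compositional inverse $q$ of $r$ modulo $z^n$, and then settles $S\sim N$ by a clean Krylov-basis computation: the norm equalities $\|S^k\|=\|N^k\|=1$ for $0\le k\le n-1$ force each $v_k=S^ke_n$ to be a unit vector, hence $v_0,\dots,v_{n-2}\in\ker(I-S^*S)$ (using $\|S\|=1$ so that $I-S^*S\ge 0$), which yields the Toeplitz Gram relations $\langle v_j,v_k\rangle=\langle e_n,S^{k-j}e_n\rangle$; strict upper triangularity then collapses the Gram matrix to the identity. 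This bypasses both the numerical-range step and the inductive lemma entirely, and it explains structurally \emph{why} the norm data suffice: they force $S$ to act isometrically along its Krylov chain from $e_n$. The paper's approach, on the other hand, is more explicit about which finitely many polynomials are actually being used, and its lemmas are reusable for the later Theorem~\ref{main result}. Your reduction from a general Toeplitz generator $R$ to $N$ is also cleaner than the paper's two-step passage through $Q$.
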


Before moving to the proof of Theorem \ref{toeplitz}, let us consider
one of its consequences, namely Corollary \ref{alg iso} below, which is of interest in linear-algebraic analysis.
For any $A\in\mn$, the unital algebra ${\rm Alg}\,A$ generated by $A$ is
\[
{\rm Alg}\,A\,=\,\{f(A)\,:\,f\in\ct\}\,.
\]
In particular, $\ut={\rm Alg}\,S$, where
\[
S\;=\;
\left[ \begin{array}{ccccc} 0& 1&0&\cdots &0 \\
                                                    0 &0&1&\ddots &\vdots \\
                                                    0 &0&\ddots&\ddots&0 \\
                                                    \vdots &&\ddots&\ddots&1 \\
                                                   0  &\dots&\dots&0& 0 \end{array}\right]\,.
\]
More generally, if $R\in\ut$ is of the form \eqref{toeplitz matrix} and satisfies $z_1\neq0$, then the range of $R-z_0I$
is clearly $(n-1)$-dimensional and so the kernel of $R-z_0I$ is $1$-dimensional. Thus, there is an invertible
$X\in \mn$ for which $S=X(R-z_0I)X^{-1}$, the Jordan canonical form of $R-z_0I$.
Hence, the abelian algebras $\ut={\rm Alg}\,S$ and ${\rm Alg}\,R$
are isomorphic. Because ${\rm Alg}\,R$ is a subalgebra of $\ut$, they can be isomorphic only if they are equal.
Thus, if $R\in\ut$ satisfies $z_1\neq0$, then $R$ is called a \emph{generator} of $\ut$.
(Consideration of the Jordan form shows that this necessary condition on $z_1$ is also sufficient
for $R\in\ut$ to be a generator of $\ut$, but we do not require this fact.)

 \begin{corollary}\label{alg iso} If $\varrho:\ut\rightarrow\mn$ is a homomorphism such that
$\|\varrho(X)\|=\|X\|$, for every $X\in\ut$, then there is a $U\in\un$ such that
$\varrho$ is given by
$\varrho(X)=U^*XU$.
\end{corollary}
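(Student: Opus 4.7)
The plan is to reduce Corollary~\ref{alg iso} to a direct application of Theorem~\ref{toeplitz} at the generator $S$ of $\ut$. First I would set $R:=\varrho(S)$ and exploit multiplicativity of $\varrho$ to obtain $\varrho(f(S))=f(R)$ for every $f\in\ct$. Because every element of $\ut$ is of the form $f(S)$, the norm-preserving hypothesis on $\varrho$ then translates into $\|f(R)\|=\|f(S)\|$ for all $f\in\ct$.

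Before invoking Theorem~\ref{toeplitz} I would dispose of one subtle point, namely that $\varrho$ is automatically unital (this is needed in order to write $\varrho(f(S))=f(R)$ even for constant polynomials). The matrix $P:=\varrho(I)$ is an idempotent of operator norm one, hence an orthogonal projection of some rank $r\leq n$ (an idempotent in $\bh$ whose operator norm does not exceed one is necessarily self-adjoint). Since $\varrho(X)=P\varrho(X)P$ for every $X\in\ut$, the image of $\varrho$ is contained in $P\mn P$, which is $*$-isomorphic to $\mr$; in particular $\varrho(S)$ is a nilpotent matrix of size $r$ and so $\varrho(S)^{r}=0$. On the other hand, the hypothesis gives $\|\varrho(S)^{n-1}\|=\|S^{n-1}\|=1$, which forces $r\geq n$ and hence $P=I$.

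At this point Theorem~\ref{toeplitz} applies, with the fixed Toeplitz matrix taken to be $S$ itself (so $z_{0}=0$, $z_{1}=1\neq 0$) and the arbitrary matrix taken to be $R$, yielding a $U\in\un$ with $R=U^{*}SU$. For any $X=f(S)\in\ut$ one then has
\[
\varrho(X)=f(R)=f(U^{*}SU)=U^{*}f(S)U=U^{*}XU,
\]
which is the desired conclusion.

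The only genuine content of the corollary lies in Theorem~\ref{toeplitz}; once that is in hand the corollary is a short formal consequence, and the verification of unitality is the only place that requires even a brief argument.
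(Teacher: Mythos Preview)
Your proposal is correct and follows essentially the same route as the paper: pick a generator of $\ut$, transfer the norm hypothesis to $\|f(\varrho(\text{generator}))\|=\|f(\text{generator})\|$ for all $f\in\ct$, invoke Theorem~\ref{toeplitz}, and conclude by the generator property. The paper uses an arbitrary generator $R$ with $z_1\neq 0$ where you use $S$, but this is cosmetic. Your explicit verification that $\varrho(I)=I$ is a worthwhile addition; the paper tacitly assumes unitality when it writes $f(A)=\varrho(f(R))$.
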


\begin{proof} Choose $R\in\ut$ of the form \eqref{toeplitz matrix} with $z_1\neq0$ and let $A=\varrho(R)$.
Thus, $f(A)=\varrho(f(R))$, for all $f\in\ct$. By hypothesis,
$\|f(A)\|=\|\varrho(f(R))\|=\|f(R)\|$, for all $f\in\ct$; therefore,
Theorem \ref{toeplitz} asserts that $A=U^*RU$ for some $U\in\un$.
Because $R$ generates $\ut$, we conclude that $\varrho(X)=U^*XU$, for every $X\in\ut$.
\end{proof}

 We move now to the proof of Theorem \ref{toeplitz}.

\subsection{Lemmas}

\begin{lemma}\label{lemma-1} If
\begin{equation}\label{Q}
Q\;=\;\left[ \begin{array}{ccccc} 0&1&1&\cdots &1 \\
                                                     &0&1&\ddots &\vdots \\
                                                     &&\ddots&\ddots&\vdots \\
                                                     &&&\ddots&1 \\
                                                     &&&& 0 \end{array}
       \right]\,,
\end{equation}
then $\displaystyle\sum_{k=1}^\infty\,(-1)^{k+1}Q^k\,=\,S$.
\end{lemma}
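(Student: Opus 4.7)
The plan is to exploit the fact that $Q$ is strictly upper triangular, hence nilpotent with $Q^n = 0$. This immediately makes the infinite sum a finite one, and also makes $I + Q$ invertible with inverse given by the (finite) Neumann series
\[
(I+Q)^{-1} \;=\; \sum_{k=0}^{n-1}(-1)^k Q^k.
\]
Multiplying on the left by $Q$ and re-indexing, one sees that $\sum_{k=1}^\infty (-1)^{k+1} Q^k = Q(I+Q)^{-1}$. So the whole statement reduces to the identity $S = Q(I+Q)^{-1}$, equivalently $Q = S(I+Q) = S + SQ$.

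To verify $Q = S + SQ$, I would first observe that $S^k$ has the entry $1$ in position $(i,j)$ when $j - i = k$ and $0$ otherwise, so adding these up for $k = 1, \dots, n-1$ gives exactly $Q$; that is, $Q = \sum_{k=1}^{n-1} S^k$. Then $SQ = \sum_{k=1}^{n-1} S^{k+1} = \sum_{k=2}^{n-1} S^k$ (using $S^n = 0$), and so $S + SQ = \sum_{k=1}^{n-1} S^k = Q$, as required.

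As a sanity check, I would keep in reserve a direct combinatorial verification: standard induction gives $(Q^m)_{ij} = \binom{j-i-1}{m-1}$ for $j \geq i + m$ and $0$ otherwise, whence the $(i,j)$-entry of $\sum_{k=1}^\infty (-1)^{k+1} Q^k$ equals
\[
\sum_{m=1}^{j-i}(-1)^{m+1}\binom{j-i-1}{m-1} \;=\; (1-1)^{j-i-1},
\]
which is $1$ precisely when $j - i = 1$ and $0$ otherwise, matching the entries of $S$.

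There is no real obstacle here; the only subtlety is bookkeeping the truncation: one must invoke $S^n = Q^n = 0$ to convert both the Neumann series for $(I+Q)^{-1}$ and the expansion of $Q$ in powers of $S$ into genuine finite identities, so that multiplying them out is legitimate.
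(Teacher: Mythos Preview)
Your proof is correct and follows essentially the same route as the paper: both arguments rest on the identity $Q=\sum_{k\ge 1}S^k$ and the Neumann series for a nilpotent matrix. The paper phrases the conclusion as $S=I-(I+Q)^{-1}$ (obtained from $I+Q=(I-S)^{-1}$), while you phrase it as $S=Q(I+Q)^{-1}$ (obtained from $S(I+Q)=Q$); these two formulations are trivially equivalent, and your added combinatorial sanity check is a harmless bonus.
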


\begin{proof} Clearly $Q=\displaystyle\sum_{k=1}^\infty\,S^k$. Thus,
$I+Q\;=\;\displaystyle\sum_{j=0}^\infty\,S^j\;=\;(I-S)^{-1}$,
whence $I=(I-S)(I+Q)$. That is, $S=I-(I+Q)^{-1}=\displaystyle\sum_{k=1}^\infty\,(-1)^{k+1}Q^k$.
\end{proof}

\begin{lemma}\label{lemma-2} Let $Q\in\mn$ be given by \eqref{Q}. If
\[
A\;=\; \left[ \begin{array}{ccccc} 0&1&a_{13}&\cdots &a_{1n}\\
                                                     &0&1&\ddots &\vdots \\
                                                     &&\ddots&\ddots&a_{n-2,n} \\
                                                     &&&\ddots&1 \\
                                                     &&&& 0 \end{array}
       \right]
\]
has the property that $\left\| \displaystyle\sum_{k=1}^\infty\,(-1)^{k+1}A^k\right\|\leq 1$, then $A=Q$.
\end{lemma}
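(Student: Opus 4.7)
The plan is to invert the algebraic relation defining $B:=\sum_{k=1}^\infty (-1)^{k+1}A^k$, show that $\|B\|\leq 1$ forces $B=S$, and then recover $A=Q$. The sum defining $B$ is finite because $A$ is strictly upper triangular. Exactly the manipulation used in Lemma \ref{lemma-1} yields $B=A(I+A)^{-1}$ and hence, upon inversion, $A=B(I-B)^{-1}=\sum_{k=1}^\infty B^k$, again a finite sum, since $B$ is also nilpotent (being a polynomial in $A$).

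The crucial structural observation is that $B$ shares the upper-triangular skeleton of $A$: zeros on and below the main diagonal, and $1$'s on the first superdiagonal. This is because every power $A^k$ with $k\geq 2$ vanishes on the main diagonal and on the first superdiagonal (its first nonzero entries lie on the $k$-th superdiagonal), so only the leading term $A$ in the alternating sum contributes to those two diagonals of $B$. Write $b_{ij}$ for the entry of $B$ in row $i$ and column $j$; then the $k$-th column of $B$ takes the form $(b_{1k},\ldots,b_{k-2,k},1,0,\ldots,0)^T$, with squared Euclidean norm $1+\sum_{i=1}^{k-2}|b_{ik}|^2$.

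The hypothesis $\|B\|\leq 1$ then gives $\|Be_k\|\leq 1$ for every standard basis vector $e_k$, which forces $b_{ik}=0$ for all $i<k-1$ and every $k$; hence $B=S$. Substituting this into the inversion formula $A=\sum_{k=1}^\infty B^k$ yields $A=\sum_{k=1}^\infty S^k=Q$, where the last equality is the opening observation in the proof of Lemma \ref{lemma-1}. The main obstacle is the structural step---confirming that the diagonal and first superdiagonal of $B$ match those of $A$. Once that shape information is secured, the column-norm bound and the inversion formula conclude the argument immediately.
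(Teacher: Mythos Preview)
Your proof is correct and is genuinely simpler than the paper's. The paper argues by induction on $n$: it writes $A$ in block form, applies the inductive hypothesis to the leading $(n-1)\times(n-1)$ block $\tilde A$ to force $\tilde A=\tilde Q$, and then analyzes the last column of $\sum_{k}(-1)^{k+1}A^k$ using the norm bound row by row together with the identity $(\tilde I-\tilde S)^{-1}e_n=e_1+\cdots+e_n$. Your argument bypasses the induction entirely by observing two things at once: (i) the map $A\mapsto B=I-(I+A)^{-1}$ preserves the ``zero diagonal, unit first superdiagonal'' shape, and (ii) any matrix of that shape with $\|B\|\le 1$ must equal $S$, since each column already has norm at least $1$. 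The inversion $A=(I-B)^{-1}-I=\sum_{k\ge1}B^k$ then recovers $A=Q$ in one stroke. What you gain is a global, non-inductive proof; what the paper's approach buys is a more hands-on column-by-column verification that does not require noticing the clean inversion formula.
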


\begin{proof}
We proceed by induction on $n$. The base case is $n=3$. In this case,
\[
 \displaystyle\sum_{k=1}^\infty\,(-1)^{k+1}A^k \;=\; \left[ \begin{array}{ccc}0&1&a_{13}-1 \\ 0&0&1 \\0&0&0\end{array}\right]\,.
 \]
The first row of the matrix above has Euclidean length at most $1$, since $A$ has norm at most $1$. Thus,
$a_{13}=1$, implying that $A=Q$. This row condition extends unchanged to the induction step.

Assume now the statement holds in $n$-dimensional space and consider $A$, $Q$, and $S$
as acting on ${\mathbb C}^{n+1}$. Let $\tilde A$, $\tilde Q$, and $\tilde S$
denote the versions of $A$, $Q$, and $S$ that act on $\cn$, and let
$e_1,\dots,e_n$ denote the canonical orthonormal basis vectors in $\cn$. Hence, as a partitioned matrix, $A$ has the form
\[
A\;=\; \left[ \begin{array}{c|c}&\\
                 \tilde A & \eta \\
                 &\\
                 \cline{1-2}\begin{array}{ccc}  0&\cdots&0\end{array} &0 \end{array}
             \right]\,,
\]
where
\[
\eta\,=\,e_n+\sum_{i=1}^{n-1} a_{i,n+1}e_i\,=\,[a_{1,n+1},\cdots,a_{n-1,n+1}]^T\in\cn\,.
\]
Because
\begin{equation}\label{ineq1}
1\geq \left\| \displaystyle\sum_{k=1}^\infty\,(-1)^{k+1}A^k\right\| \geq
\left\| \displaystyle\sum_{k=1}^\infty\,(-1)^{k+1}\tilde A^k\right\|\,,
\end{equation}
the induction hypothesis yields
$\tilde A=\tilde Q$.
Hence, using Lemma \ref{lemma-1}, we obtain
\[
\displaystyle\sum_{k=1}^\infty\,(-1)^{k+1}A^k \;=\;
 \left[ \begin{array}{c|c}&\\
                \tilde S & \displaystyle\sum_{k=1}^\infty\,(-1)^{k+1}\tilde A^{k-1}\eta \\
                 &\\
                 \cline{1-2}\begin{array}{ccc}  0&\cdots&0\end{array} &0 \end{array}
             \right]\,.
  \]
 That is,
 \begin{equation}\label{easy to see}
  \displaystyle\sum_{k=1}^\infty\,(-1)^{k+1}A^k \;=\;
 \left[ \begin{array}{ccccc|c} 0&1&0& \cdots&0&* \\ \vdots&0&1&\ddots&\vdots&\vdots \\ &&\ddots & \ddots &0&\vdots \\ \vdots&&&\ddots&1&* \\
 0&\cdots&&\cdots&0&1
                                  \\
                 \cline{1-6} 0&\cdots&\cdots&\cdots&0 &0 \end{array}
             \right]\,.
\end{equation}
Similar to the case $n=3$, we have from \eqref{ineq1} that
\begin{equation}\label{ineq2}
1\geq \left\|
\displaystyle\sum_{k=1}^\infty\,(-1)^{k+1}A^k\right\|
\end{equation}
for the
matrix  \eqref{easy to see}. But \eqref{ineq2} holds for the matrix \eqref{easy to see} only if
the $i$-th entry in the final column of the matrix \eqref{easy to see} is $0$, for $1\leq i\leq n-1$.
Therefore,  using $\tilde A=\tilde Q$, we have $(\tilde
I-\tilde S)\eta=\lambda e_n$ for some complex number $\lambda$.
Hence,
\[
\eta \;=\;\lambda(\tilde I-\tilde S)^{-1}e_n
\;=\;\lambda(\tilde I+\tilde S+\tilde S^2+\cdots+\tilde
S^{n-2})e_n\;=\;\lambda(e_n+e_{n-1}+\cdots+e_1)\,.
\]
But on the other hand,
\[
\eta\;=\;e_n+\sum_{i=1}^{n-1} a_{i,n+1}e_i\,,
\]
which implies that $\lambda=1$ and $a_{i,n}=1$ for all $1\leq
i\leq n-1$. Therefore, $A=Q$.
\end{proof}

\subsection{Proof of Theorem \ref{toeplitz}}

Assume first that the matrix $R$ in \eqref{toeplitz matrix} has $z_0=0$ and $z_{j}=1$
for $1\leq j\leq (n-1)$; that is, assume that $R=Q$, where $Q$ has the form \eqref{Q}.
Thus, the hypothesis is that $A\in\mn$ satisfies
$\|f(A)\|=\|f(Q)\|$, for all $f\in\ct$.

By the Spectral Radius Formula,
\[
0\,=\,{\rm spr}\,Q\,=\,\displaystyle\lim_{k\rightarrow\infty}\|Q^k\|^{1/k}
\,=\,\displaystyle\lim_{k\rightarrow\infty}\|A^k\|^{1/k}
\,=\,{\rm spr}\,A\,,
\]
which implies that $A$ is nilpotent.
Without loss of generality, $A$ may be assumed to be in upper triangular
form. Furthermore, using a diagonal unitary similarity transformation, the
entries $a_{i,i+1}$ may assumed to be nonnegative, for $1\le i\le n-1$.
Indeed, since
$1=\|Q^{n-1}\|=\|A^{n-1}\|=|a_{12}a_{23}\cdots a_{n-1,n}|$, each $a_{i,i+1}$ is nonzero; thus, we may
assume that $a_{i,i+1}>0$ for all $i$.

The numerical range, or field of values, $W(X)$ of any $X\in\mn$ is given analytically by
\[
W(X) \;=\;\bigcap_{\alpha,\beta\in{\mathbb C}}\{z\in{\mathbb C}\,:\,|\alpha z+\beta|\le\|\alpha X+\beta 1\|\}\,.
\]
Hence, $W(A)=W(Q)$. Let $\Re(X)=\frac{1}{2}(X+X^*)$, for any $X\in\mn$, and
observe that $\frac{1}{2}+\Re(Q)=\frac{1}{2}\,\xi\otimes\xi$, where $\xi=\sum_{i=1}^ne_i\in\cn$ and $\xi\otimes\xi$ denotes the outer product $\xi\xi^*\in\mn$ of
$\xi$ (a column vector) with its conjugate transpose $\xi^*$. Thus,
for every unit vector $\gamma\in\cn$, the real part of $\langle Q\gamma,\gamma\rangle$ satisfies the inequality
\[
\Re\left(\langle
Q\gamma,\gamma\rangle\right)\;\geq\;-\;\frac{1}{2}\,.
\]
Because $A$ and $Q$ have the same numerical range,
$\Re (A)$ has the same property above. Now, if $P_i$ is the
projection of $\cn$ onto $\mbox{Span}\,\{e_i,e_{i+1}\}$, for each
$1\leq i\leq n-1$, then $P_iAP_i$ as a linear transformation on
the range of $P_i$ is given by
\[
\left[ \begin{array}{cc} 0&a_{i,i+1} \\ 0&0 \end{array}\right]\,.
\]
Therefore, the numerical range of $P_iAP_i$ is a disc of radius
$\frac{1}{2}a_{i,i+1}$ centered at the origin. Because
$W(P_iAP_i)\subseteq W(A) \subset\{z\in{\mathbb C}\,|\,\Re (z)\geq
{-1}/{2}\}$, we conclude that each $a_{i,i+1}\leq 1$. However,
under these conditions the equation
$1=\|A^{n-1}\|=a_{12}a_{23}\cdots a_{n-1,n}$ holds only if
$a_{i,i+1}=1$ for all $1\leq i\leq n-1$. Hence, $A$ has the
structure given in the hypothesis of Lemma \ref{lemma-2}.
Moreover, by Lemma \ref{lemma-1},
\[
1\;=\;\|S\|\;=\;\left\|\sum_{k=1}^\infty(-1)^{k+1}Q^k\right\|\;=\; \left\|\sum_{k=1}^\infty(-1)^{k+1}A^k\right\|\,.
\]
Thus, $A$ satisfies all of the hypotheses of Lemma \ref{lemma-2}, yielding $Q=A$.

For the general case, we now suppose that $R\in\ut$ satisfies $z_1\neq 0$ and
$A\in\mn$ is such that $\|f(A)\|=\|f(R)\|$ for every $f\in\ct$. Therefore, the
ideals $\mathfrak J_A$ and $\mathfrak J_R$ coincide, where for a given $X\in\mn$
\[
\mathfrak J_X\,=\,\{p\in\ct\,:\,p(X)=0\}\,.
\]
Because $R$ is a generator
of $\ut$, there is a $g\in\ct$ such that $Q=g(R)$. Let $B=g(A)$. Thus,
$\|h(B)\|=\|h(Q)\|$, for every $h\in\ct$. By what we proved above, this
yields $B=U^*QU$ for some $U\in\un$. As $Q$ generates $\ut$, there is an $q\in\ct$
such that $R=q(Q)$. Hence,
\[
p(t)\,=\,t-q\left(g(t)\right)\,\in\,\mathfrak J_R\,=\,\mathfrak J_A\,.
\]
This implies that
\[
0\,=\,p(A)\,=\,A-q\left(g(A)\right)\,=\,A-q(B)\,=\,A-U^*q(Q)U\,=\,A-U^*RU\,,
\]
which completes the proof.

%%%%%%%%%%%%%%%
\section{Necessary and Sufficient Conditions for Unitary Similarity}\label{S:main thm}

If $A\in\mn$ is unicellular --- say with spectrum $\{\lambda\}$
--- and if $B\in \mn$ is any matrix for which $\|f(A)\|=\|f(B)\|$
for all $f\in\ct$, then $A$ and $B$ are similar, as the condition
implies that $\sigma(B)=\sigma(A)$ and that
$(B-\lambda I)^{n-1}\not=0$. But, unlike the case for generators of
the upper triangular Toeplitz matrices, $A$ and $B$ need not be
unitarily equivalent (Proposition \ref{ws} below). Therefore, one
can have an invertible matrix $Z\in\mn$ with
\[
\|f(A)\|\;=\;\|Zf(A)Z^{-1}\|\,,\;\mbox{for all }f\in\ct\,,
\]
and yet $Z$ can fail to be unitary.

\begin{proposition}\label{ws} If $0<\alpha<\beta$, then the unicellular matrices
\begin{equation}\label{rh}
A\;=\; \left[ \begin{array}{ccc} 0&\alpha&0 \\ 0&0&\beta \\ 0&0&0 \end{array}\right]
\quad\mbox{ and }\quad
A'\;=\;\left[ \begin{array}{ccc} 0&\beta&0 \\ 0&0&\alpha \\ 0&0&0 \end{array}\right]
\end{equation}
satisfy $\|f(A')\|=\|f(A)\|$ for all $f\in\ct$, but $A' \not\sim A$.
\end{proposition}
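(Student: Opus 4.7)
The plan is to verify the two assertions separately. For the norm equality, I would exploit an explicit unitary equivalence between $A'$ and the transpose $A^T$. Let $P$ denote the $3 \times 3$ reversal permutation matrix, $Pe_i = e_{4-i}$ for $i=1,2,3$; then $P$ is a self-inverse unitary, and a direct calculation gives $A' = PA^TP$. Because polynomial evaluation commutes with transposition, it follows that $f(A') = Pf(A)^TP$ for every $f \in \ct$. The norm in \eqref{norm} coincides with the largest singular value and is therefore invariant both under unitary conjugation and under transposition. Hence $\|f(A')\| = \|f(A)^T\| = \|f(A)\|$.

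For the claim $A' \not\sim A$, I would suppose, for contradiction, that $U^*AU = A'$ for some $U \in \un$ and derive a contradiction from the kernel flag. One checks directly that $\ker A^k = \ker A'^k = \mbox{Span}\{e_1,\dots,e_k\}$ for $k=1,2$. The relation $U^*AU = A'$ implies that $U$ maps $\ker A'^k$ onto $\ker A^k$, so $U$ preserves both $\mbox{Span}\{e_1\}$ and $\mbox{Span}\{e_1,e_2\}$. Combined with the orthonormality of the columns of $U$, this forces $U$ to be a diagonal unitary, $U = \mbox{diag}(\gamma_1,\gamma_2,\gamma_3)$ with $|\gamma_i| = 1$. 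Conjugation by such a $U$ preserves the moduli of all matrix entries, so the $(1,2)$-entry of $U^*AU$ has modulus $\alpha$, whereas the $(1,2)$-entry of $A'$ has modulus $\beta$. Since $\alpha \neq \beta$, this is the desired contradiction.

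The only nontrivial step is spotting the identity $A' = PA^TP$, which expresses in matrix form that reversing the weights of a $3$-dimensional upper-triangular weighted shift yields a matrix unitarily equivalent to the transpose of the original. Once this identity is in hand, both halves of the proposition follow from short, direct computations.
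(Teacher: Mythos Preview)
Your proof is correct. The norm-equality half matches the paper's argument exactly: the paper writes $A'=W^*A^TW$ for the same reversal permutation and invokes transpose- and unitary-invariance of the norm. For $A\not\sim A'$ the paper appeals to Littlewood's algorithm (and notes that one could also use Specht's criterion with the word $\omega(x,y)=xy^2x^2y$, or verify directly that $UA'=AU$ has no unitary solution); you instead carry out that direct verification cleanly via the kernel flag, observing that any intertwining unitary must be upper triangular, hence diagonal, hence modulus-preserving on entries. Your argument is self-contained and elementary, while the paper's citations give the reader a link to general unitary-similarity criteria; both are perfectly valid for this $3\times 3$ example.
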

\begin{proof}
Note that $A'=W^*A^TW$, where $X\mapsto X^T$ denotes the transpose map and
\[
W\;=\; \left[ \begin{array}{ccc} 0&0&1 \\ 0&1&0 \\ 1&0&0
\end{array}\right] \,.
\]
Because the norm is transpose invariant,
$\|f(A')\|=\|f(A^T)\|=\|f(A)^T\|=\|f(A)\|$, for all
$f\in\ct$.
On the other hand, $A \not\sim A'$ by
Littlewood's algorithm \cite{littlewood1953} because $0<\alpha<\beta$.
(One also can verify
directly that the equation $UA'=AU$ is impossible to satisfy with $U\in{\rm U}_3$. Alternatively,
the referee observed that the matrices $A$ and $A'$ fail to satisfy Specht's tracial condition with the word
$\omega(x,y)=xy^2x^2y$; hence, $A \not\sim A'$.)
\end{proof}

\begin{notation} If $1\leq k\leq n$ and $X=[x_{ij}]_{i,j=1}^n\in\mn$, then $X_k=[x_{ij}]_{i,j=1}^k\in\mk$. That is, $X_k$ is
the leading $k\times k$ principal submatrix of $X$.
\end{notation}

The failure of $A$ and $A'$ in \eqref{rh} to be unitarily similar
is explained by the fact that the norms of $f(A_2)$ and $f(A_2')$ do not
always coincide, even though $\|f(A)\|=\|f(A')\|$ for all $f\in\ct$.
This observation motivates our second main result of the present paper.

\begin{theorem}\label{main result} Assume that $A\in\mn$ is an upper triangular matrix such that
\begin{enumerate}
\item[{\rm (a)}] $a_{ii}=a_{kk}$ for all $1\leq i,k\leq n$, and
\item[{\rm (b)}] $a_{i,i+1}\neq0$, for all $1\leq i\leq (n-1)$ {\rm (}that is, the first superdiagonal of $A$ has only nonzero entries{\rm )}.
\end{enumerate}
Then the following statements are equivalent for an upper triangular matrix $A'\in\mn$:
\begin{enumerate}
  \item\label{mr-one} $\|f(A_i)\|=\|f(A_i')\|$, for all $f\in\ct$ and $1\leq i\leq n$;
   \item\label{mr-two} $A'=W^*AW$ for some diagonal unitary matrix $W\in\un$.
\end{enumerate}
\end{theorem}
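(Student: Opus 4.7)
The direction $(2) \Rightarrow (1)$ is immediate, since conjugation by a diagonal unitary $W \in \un$ restricts on each leading principal submatrix to conjugation by a smaller diagonal unitary, and the operator norm is unitarily invariant. For $(1) \Rightarrow (2)$ I plan to induct on $n$, the case $n=1$ being trivial. For the inductive step, I first apply the induction hypothesis to $A_{n-1}$ and $A'_{n-1}$ (note that $A_{n-1}$ inherits (a), (b), and that the hypothesis yields $\|f((A_{n-1})_i)\| = \|f((A'_{n-1})_i)\|$ for $i \le n-1$). This produces a diagonal unitary $V \in {\rm M}_{n-1}$ with $V^* A_{n-1} V = A'_{n-1}$. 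Replacing $A$ by $(V \oplus 1)\,A\,(V \oplus 1)^*$ I may assume $A_{n-1} = A'_{n-1}$, and translating by $\lambda I$ I may assume $\lambda = 0$. Since $A^{n-1}$ is rank one with sole entry $(A^{n-1})_{1,n} = \bigl(\prod_{j=1}^{n-2} A_{j,j+1}\bigr) A_{n-1,n}$, the equality $\|A^{n-1}\| = \|A'^{n-1}\|$ forces $|A_{n-1,n}| = |A'_{n-1,n}| \neq 0$; a further conjugation of $A$ by $I_{n-1} \oplus (A'_{n-1,n}/A_{n-1,n})$ then reduces to the case where $A$ and $A'$ also agree in the $(n-1,n)$ entry.

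It remains to show $A_{k,n} = A'_{k,n}$ for $k = 1,\ldots, n-2$, which I would do by downward induction on $k$. Assuming $A_{l,n} = A'_{l,n}$ for all $l > k$, consider the one-parameter family $f_s(t) := t^k + s\,t^{n-1}$, $s \in \mathbb C$. The block identity $A^m = \left(\begin{smallmatrix} B^m & B^{m-1}\vec a \\ 0 & 0 \end{smallmatrix}\right)$, with $B = A_{n-1}$ and $\vec a$ the truncated last column of $A$, yields $A^{n-1} = A'^{n-1}$ and shows, via the expansion $(A^k)_{1,n} = \sum_{m \ge k} (A^{k-1})_{1,m} A_{m,n}$, that $A^k - A'^k$ has a single nonzero entry at $(1,n)$ equal to $P_k(A_{k,n} - A'_{k,n})$ where $P_k := \prod_{j=1}^{k-1} A_{j,j+1} \neq 0$. (The coefficient of $A_{k,n}$ is $(A^{k-1})_{1,k} = P_k$, while for $l > k$ only already-matching entries enter; and the rows $(A^k)_{i,n}$ with $i \ge 2$ depend only on $A_{l,n}$ with $l > k$.) Writing $F_0 := A^k$, $c := \prod_{j=1}^{n-1} A_{j,j+1} \neq 0$, and $\delta := P_k(A_{k,n} - A'_{k,n})$, one then has $f_s(A) = F_0 + sc\,E_{1,n}$ and $f_s(A') = F_0 + (sc-\delta)\,E_{1,n}$.

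Setting $w := sc$ (which ranges over all of $\mathbb C$ as $s$ does) and $G(w) := \|F_0 + w\,E_{1,n}\|$, the hypothesis becomes $G(w) = G(w - \delta)$ for every $w$, so $G$ is $\delta$-periodic. The main obstacle, and the crux of the argument, is to rule out $\delta \neq 0$; I plan to do this by a growth estimate. The Rayleigh quotient at $e_n$ and the Frobenius-norm inequality sandwich $G(w)^2$ as
\[
|w + (F_0)_{1,n}|^2 + \sum_{i \ge 2} |(F_0)_{i,n}|^2 \;\le\; G(w)^2 \;\le\; |w + (F_0)_{1,n}|^2 + \bigl(\|F_0\|_F^2 - |(F_0)_{1,n}|^2\bigr),
\]
so $G(w)^2 = |w + (F_0)_{1,n}|^2 + O(1)$ uniformly in $w$. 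Consequently $G(w)^2 - G(w-\delta)^2 = 2\,\mathrm{Re}\bigl(\overline{w + (F_0)_{1,n}}\,\delta\bigr) - |\delta|^2 + O(1)$ as $|w| \to \infty$; if $\delta \neq 0$, taking $|w| \to \infty$ along the ray $\arg w = \arg \delta$ makes the right-hand side unbounded, contradicting $G(w) \equiv G(w-\delta)$. Thus $\delta = 0$ and, since $P_k \neq 0$, $A_{k,n} = A'_{k,n}$, completing the induction.
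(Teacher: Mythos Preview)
Your proof is correct and follows essentially the same strategy as the paper: induct on $n$, reduce to matching the last column after the leading $(n-1)\times(n-1)$ block has been aligned, then use the polynomials $t^k + s\,t^{n-1}$ to isolate a single $(1,n)$-entry discrepancy and kill it via a periodicity-versus-growth argument on $w\mapsto\|F_0 + wE_{1,n}\|$. The only differences are cosmetic: you compare $A$ and $A'$ directly (computing $A^k-A'^k$ via the block formula) rather than arguing that ``the entries are uniquely determined,'' and you make the growth estimate explicit with Rayleigh/Frobenius bounds where the paper simply asserts $h(z)\to\infty$.
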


\begin{proof} We need only prove that first statement implies the second.

There is a diagonal unitary $W\in\un$ such that the entries in the first superdiagonal of
the upper triangular matrix $W^*AW$ are positive; therefore, without loss of generality we
assume that $a_{i,i+1}>0$ for $1\leq i\leq (n-1)$.
As we argued in the proof of Theorem \ref{toeplitz}, the
condition $\|f(A)\|=\|f(A')\|$, for all $f\in\ct$, implies that $A'$ has one point of spectrum, in this case $\lambda=a_{11}$.
Therefore, by scalar translation $X\mapsto X -\lambda I$ we may assume without loss of generality
that $\lambda=0$. That is,
\begin{equation}\label{e:upper t-form}
A\;=\; \left[ \begin{array}{ccccc}  0 & a_{12} & a_{13} & \dots & a_{1n}\\
                       0 & 0      & a_{23} & \dots & a_{2n}\\
                         &        &\ddots  & \ddots& \vdots      \\
                        &        &        &  0    & a_{n-1 n}\\
                      0 &        &        &       &  0
\end{array}\right]\,,
\end{equation}
where $a_{\ell,\ell+1}>0$, for $1\le \ell\le n-1$.

To complete the proof of theorem, it is sufficient to prove
that the entries of $A$ in (\ref{e:upper t-form}) are completely determined from the values of $\|f(A_i)\|$
for $1\le i\le n$ and all $f\in\ct$.

We shall proceed by induction on $n\ge 3$.

Let $n=3$. Thus,
\begin{equation}\label{e: upper t-form-3}
A\;=\; \left[ \begin{array}{ccc} 0 & a_{12} & a_{13}\\
                       0 & 0      & a_{23}\\
                        0 &      0  &0
\end{array}\right]\,.
\end{equation}
The value of $a_{12}$ is determined via the fact that
$\|A_2\|=a_{12}$, and so the value of $a_{23}$ is determined
from the equation $a_{12}a_{23}=\|A^2\|$. Using $f(t)=t$, we have
\[
\|A\|^2\;=\;\frac{1}{2}\left(a_{12}^2+a_{23}^2+|a_{13}|^2+\sqrt{(a_{12}^2+a_{23}^2+|a_{13}|^2)^2-4a_{12}^2a_{23}^2}\right)\,,
\]
which determines the value of $|a_{13}|$. Two similar calculations using the polynomials
$f(t)=t-\frac{1}{a_{12}a_{23}}t^2$  and $g(t)=t-\frac{i}{a_{12}a_{23}}t^2$ determine the values
of $|a_{13}-1|$ and $|a_{13}-i|$. These last two quantities together with the value of $|a_{13}|$
determine the complex number $a_{13}$,
thereby establishing the base case for the induction.

Assume now that the statement holds for all spaces of dimension up to and including
$n-1$; we will show the statement also holds for spaces of dimension $n$.

For convenience, we denote the entries of $A^k$ by
$a_{ij}^{(k)}$. By the inductive hypothesis, the
entries of the submatrix
$A_{n-1}$ of $A$ are uniquely determined by the norms $\|f(A_j)\|$,
for various $f\in\ct$ and $1\le j\le (n-1)$. Therefore, the
only elements left to consider are those in the final column of $A$: $a_{in}$, $1\le i\le (n-1)$.
We shall obtain these entries in an argument that requires $n-1$ steps; each step uses the
conclusion of the previous step.

\vskip 4pt
\textsc{Step 1}. Recall $A^n=0$ and $A^{n-1}\neq 0$.
The elements of $A^{n-1}$ are zero except in the $(1,n)$ position, where we have
\[
\|A^{n-1}\|\;=\;|a_{1n}^{(n-1)}|\;=\;a_{12}a_{23}\dots a_{n-2, n-1}a_{n-1, n} \,.
\]
Hence, $a_{n-1 n}$ is uniquely determined by the norms $\|f(A_j)\|$ for various $f\in\ct$ and $1\le j\le n$.
This means, in addition, all of the entries of $A^{n-1}$ are now determined.

\vskip 4pt
\textsc{Step $i$}. Assume that $3\le i \le (n-1)$ and that we have completed Steps $1$ to $i-1$, giving us the
values of $a_{j,n}$, for $j=n-i-1,\dots,n-1$ and the entries of each $A^{n-j}$, for $j=1,\dots,i-1$.
We aim to show that the value of $a_{n-i,n}$ is determined from the norms of various $f(A_j)$.

For each complex number $z\in{\mathbb C}$, let
$g_z\in\ct$ be given by $g_z(t)=t^{n-i}+\frac{z}{a_{12}q}t^{n-1}$, where $q=a_{2, n}^{(n-2)}$ (as in Step 2).
Thus,
\[
g_z(A)\;=\; \left[ \begin{array}{ccccccc} 0 & \dots  & 0      & a_{1,n-i+1}^{(n-i)} & a_{1,n-i+2}^{(n-i)} & \dots  & a_{1n}^{(n-i)}+z \\
                               &        &        &    0                & a_{2,n-i+2}^{(n-i)} & \dots  & a_{2n}^{(n-i)} \\
                               &        &        &                     & \ddots              & \ddots & \vdots         \\
                               &        &        &                     &                     &    0   &  a_{in}^{(n-i)}\\
                               &        &        &                     &                     &        &       0        \\
                               &        &        &                     &                     &        &  \vdots        \\
                             0 &        &        &                     &                     &        &  0
\end{array}\right]\,.
\]
Observe that $g_z(A)$ is a rank-1 perturbation (by a matrix unit) of $A^{n-i}$: namely,
\[
g_z(A)\,=\,A^{n-i}+zE_{1,n}\,.
\]
Suppose that there is a
complex number $\tilde{a}_{1,n}^{(n-i)}$ such that
\[
\|\tilde{A}^{n-i}+zE_{1,n}\|\,=\,\|A^{n-i}+zE_{1,n}\|\,,\;\mbox{for all }\,z\in{\mathbb C}\,,
\]
where $\tilde{A}^{n-i}$ is the matrix  obtained from $A^{n-i}+zE_{1,n}$ by replacing $a_{1,n}^{(n-i)}$ by $\tilde{a}_{1,n}^{(n-i)}$.
We shall prove that $\tilde{a}_{1,n}^{(n-i)}=a_{1,n}^{(n-i)}$.
Define a function $h:{\mathbb C}\rightarrow {\mathbb R}_+$ by $h(z)=\|A^{n-i}+zE_{1n}\|$ and let $\gamma=\tilde{a}_{1,n}^{(n-i)}-a_{1,n}^{(n-i)}$.
Thus, $h(z)=h(z+\gamma)$, for all $z\in{\mathbb C}$. In particular, $h(0)=h(k\gamma)$, for all positive integers $k$. However, as it is clear that
$|h(z)|\rightarrow\infty$ as $|z|\rightarrow\infty$, the equations $h(0)=h(k\gamma)$, for all positive integers $k$, can hold only if $\gamma=0$.

Thus, we have shown that the $(1,n)$-entry of $A^{n-i}$, namely
$a_{1,n}^{(n-i)}$, is determined uniquely by the
norms of various $f(A_j)$.

Because the first $n-i-1$ entries in the first row of $A^{n-i-1}$
are zero and because the first $n-i-1$ entries of the last column
of $A$ are $a_{1,n}\dots,a_{1,n-i-1}$, we obtain from
$A^{n-i}=A^{n-i-1}A$ that the $(1,n)$-entry of $A^{n-i}$ is given
by
\begin{equation}\label{e:end}
a_{1,n}^{(n-i)} \;=\;
a_{1,n-i}^{(n-i-1)}a_{1,n-i} \,+\,\sum_{k=1}^{i-1} a_{1,n-i+k}^{(n-i-1)}a_{1,n-i+k}\,.
\end{equation}
Because the entries $a_{1,n}^{(n-i)}$, $a_{1,n-i}^{(n-i-1)}$, $a_{1,n-i+k}^{(n-i-1)}$, and $a_{1,n-i+k}$,
for $1\le k\le i-1$, have already been determined from the
norms of various $f(A_j)$ using the induction hypothesis and Steps $1$ to $i-1$,
(\ref{e:end}) implies that the value of
$a_{1,n-i}$ is determined uniquely from the norms of various $f(A_j)$.

This completes the induction and, hence, the proof of the theorem.
\end{proof}

Note that an upper triangular unicellular matrix $A$ satisfies the hypothesis of Theorem \ref{main result},
and so Theorem \ref{main result} solves the unitary similarity problem in the class of unicellular matrices.

%%%%%%%%%%%%%%%%%%%%%%%%%%%
\section{Application}

%\subsection{The Volterra Integral Operator}

In the theory of integral equations, the classical \emph{Volterra operator} $V$ of integration has some remarkably special
properties \cite{Gohberg--Krein-Volterra-book}.
The operator $V$ is defined as follows: for each $f\in {\rm L}^2([0,1])$, let $Vf\in {\rm L}^2([0,1])$ be given by
\[
Vf\,(t)\;=\;2 i\int_t^1\,f(s)\,ds\,,\quad f\in {\rm L}^2([0,1]),\;t\in[0,1]\,.
\]
In the context of our work in this paper, the operator $V$ is unicellular, which in infinite dimensions
is to say that its closed invariant subspaces are
totally ordered by inclusion.

A question raised many years ago by
Arveson \cite[page 218]{arveson1969} asks whether the norms $\|f(V)\|$, for $f\in\ct$, determine the
unitary similarity class of $V$ in the set of irreducible compact operators on ${\rm L}^2([0,1])$. Although this
question remains open, we prove below that given any $\varepsilon>0$ there is a unicellular
piece $A$ of the Volterra operator whose norms $\|f(A)\|$ determine its unitarily similarity class
and such that $A$ is within $\varepsilon$ of $V$ uniformly on ${\rm L}^2([0,1])$.

\begin{proposition} For every $\varepsilon>0$ there is a finite-dimensional subspace
${\rm L}\subset {\rm L}^2([0,1])$ such that, if $P$ denotes the projection onto $\rm L$, then
\begin{enumerate}
\item $PVP_{\vert {\rm L}}$ is a unicellular operator whose unitary similarity orbit,
as an operator on $\rm L$, is completely determined by the norms $\|f(PVP_{\vert {\rm L}})\|$, for $f\in\ct$, and
\item $\|PVP-V\|<\varepsilon$.
\end{enumerate}
\end{proposition}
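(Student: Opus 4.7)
The plan is, for each positive integer $n$, to construct a finite-dimensional subspace $L_n\subset L^2([0,1])$ whose compression $P_nVP_n\vert_{L_n}$ has the form of an upper triangular Toeplitz matrix (so that Theorem~\ref{toeplitz} applies), and then to invoke compactness of $V$ to ensure $\|P_nVP_n-V\|<\varepsilon$ once $n$ is large enough.

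For the construction, I would set $\chi_k=\sqrt{n}\,\mathbf{1}_{[(k-1)/n,\,k/n]}$ for $k=1,\dots,n$; these have pairwise disjoint supports and unit $L^2$-norm, hence form an orthonormal basis of $L_n:=\mathrm{span}\{\chi_1,\dots,\chi_n\}$. Let $P_n$ be the orthogonal projection onto $L_n$. A direct computation shows that $V\chi_k$ equals $2i/\sqrt{n}$ on $[0,(k-1)/n]$, decreases linearly from $2i/\sqrt{n}$ to $0$ on $[(k-1)/n,k/n]$, and vanishes on $[k/n,1]$. Integrating against $\overline{\chi_j}$ then yields
\[
\langle V\chi_k,\chi_j\rangle\;=\;\begin{cases} 2i/n, & j<k,\\ i/n, & j=k,\\ 0, & j>k.\end{cases}
\]
Therefore the matrix of $P_nVP_n\vert_{L_n}$ in the basis $\{\chi_k\}$ lies in $\ut$ with $z_0=i/n$ and $z_1=z_2=\cdots=z_{n-1}=2i/n$. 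Since $z_1=2i/n\neq 0$, this matrix is a generator of $\ut$; as discussed in the paragraph preceding Corollary~\ref{alg iso}, it is similar to a single $n\times n$ Jordan block with eigenvalue $i/n$, and hence is unicellular. Theorem~\ref{toeplitz} then directly asserts that its unitary similarity orbit is completely determined by the norms $\|f(P_nVP_n\vert_{L_n})\|$ for $f\in\ct$, establishing assertion~(1).

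For (2), because the simple functions subordinate to refining equipartitions of $[0,1]$ are dense in $L^2([0,1])$, we have $P_n\to I$ in the strong operator topology. Since $V$ is compact (indeed Hilbert--Schmidt) and any compact operator sends the closed unit ball to a set on which the strong operator topology coincides with the norm topology, it follows that $VP_n\to V$ in operator norm; applying the same reasoning to $V^*$ yields $P_nV\to V$ in norm as well, and then a routine triangle inequality gives $P_nVP_n\to V$ in norm. Choosing $n$ sufficiently large and setting $L=L_n$ completes the proof.

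The main obstacle is really the selection of the right basis: indicator functions on equal subintervals are precisely the basis in which the Volterra integral acts uniformly across subintervals, forcing the Toeplitz structure that makes Theorem~\ref{toeplitz} applicable. Once this choice is spotted, the remaining arguments---the generator/Jordan-block structure and the fact that $P_nVP_n$ converges in norm to a compact operator---are standard.
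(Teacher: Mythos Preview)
Your proof is correct and follows essentially the same approach as the paper: the same step-function basis yields the compressed matrix $\tfrac{i}{n}(I+2Q)$, to which Theorem~\ref{toeplitz} applies, and the same strong-convergence-plus-compactness argument gives the norm approximation. Your write-up is in fact more detailed than the paper's, which simply records the matrix form (citing Davies--Simon) and quotes compactness for the convergence.
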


\begin{proof}
We use an approximation scheme of Davies and Simon \cite{davies--simon2006}, which they employed to
compute the norm of $V$.
For each positive integer $m$, let ${\rm H}_m$ be the Hilbert space spanned by the $m$ orthonormal functions
$\sqrt{m}\chi_{E_j}$, $0\leq j\leq n-1$, where $E_j=[\frac{j}{m},\frac{j+1}{m})$. If $P_m$ is the projection with range ${\rm H}_m$, then
$P_mVP_m$ considered as an operator on ${\rm H}_m$ has a matrix representation
with respect to this orthonormal basis of ${\rm H}_m$ that is given by
\[
P_mVP_m{}_{\vert {\rm H}_m}\;=\;\frac{i}{m}\left(1+2Q\right)\,,
\]
where $Q$ is the Toeplitz operator acting on ${\mathbb C}^m$
given by \eqref{Q}. Therefore, by Theorem \ref{toeplitz},
the unitary similarity orbit of
$P_mVP_m{}_{\vert {\rm H}_m}$ is completely determined by the norms $\|f(P_mVP_m{}_{\vert {\rm H}_m})\|$, for $f\in\ct$.

The sequence $\{P_m\}_{m}$ of finite-rank projections $P_m$ converges strongly to the identity operator. Hence, because $V$ is a compact operator,
there is an $m$ such that $\|P_mVP_m-V\|<\varepsilon$.
\end{proof}

%%%%%%%%%%%%%%%
\section{Remark}
%\subsection{Scalar versus higher-dimensional phenomena}\label{h-dim}

Theorem \ref{main result} is linked to higher-dimensional
phenomena encoded by the \emph{matricial spectrum}
of $A\in\mn$ \cite{farenick1993}.

Because for every $A\in\mn$ the unital algebra $\alg A$ is
abelian, there exist unital homomorphisms $\alg A\rightarrow\mk$, for all $1\leq k \leq n-1$.  For a given $k$, let $\Hom(A, \mk)$ denote the set
of all unital homomorphisms $\alg A\rightarrow\mk$. If $\rho\in \Hom(A,\mk)$, then there is a $k$-dimensional
subspace ${\rm L}\subseteq{\mathbb C}^n$ such that
$\rho(A) \sim PAP_{\vert{\rm L}}$, where $P\in \mn$ is the unique (selfadjoint) projection with range ${\rm L}$. This subspace ${\rm L}$
is necessarily \emph{semi-invariant} under $A$; conversely, every $k$-dimensional
semi-invariant subspace of $A$ determines an element $\rho\in \Hom(A,\mk)$ \cite[Theorem 3.3.1]{Gohberg-Lancaster-Rodman-book}.

It is natural to consider the values of $\rho\in \Hom(A,\mk)$ as higher order spectra.
Specifically, consider the $k$-th matricial spectrum of $A$:
\[
\sigma_k(A)\;=\;\{\Lambda\in\mk\,:\,\Lambda=\rho(A)\mbox{ for some }\rho\in \Hom(A,\mk)\}\,.
\]
This set is closed under unitary similarity, and is itself a unitary similarity invariant of $A$. Theorem \ref{main result} is formulated in the context of
leading principal submatrices of upper triangular matrices, which in a basis-free setting means that the formulation is in the context of
invariant subspaces; if one strengthens that to semi-invariant subspaces,
then a slightly weaker hypothesis on $B$ is afforded.

\begin{proposition}\label{si}
Assume that
$A, B\in \mn$ and that $A$ is unicellular. If for each $1\leq k\leq n$ and each $\rho\in \Hom(A,\mk)$ there is a
$\varrho\in \Hom(B,\mk)$ such that
\[
\|\varrho(f(B))\|\;=\;\|\rho(f(A))\|\,,\;\mbox{for all }\,f\in \ct\,,
\]
then $B\sim A$.
\end{proposition}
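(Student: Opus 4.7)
The plan is to reduce Proposition \ref{si} to Theorem \ref{main result}. First, by Schur's theorem I replace $A$ by a unitarily similar upper triangular matrix. Since $A$ is unicellular, its diagonal is the constant $\lambda$ (the sole eigenvalue) and its first superdiagonal has no zero entries (a zero entry would force $\operatorname{rank}(A-\lambda I)\le n-2$, contradicting unicellularity), so $A$ satisfies hypotheses (a) and (b) of Theorem \ref{main result}.

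Next, I apply the hypothesis with $k=n$ and $\rho=\mathrm{id}\in\Hom(A,\mn)$. Since the only $n$-dimensional semi-invariant subspace of $\mathbb{C}^n$ is $\mathbb{C}^n$ itself, the resulting $\varrho(B)$ is unitarily similar to $B$; hence $\|f(B)\|=\|\varrho(f(B))\|=\|f(A)\|$ for every $f\in\ct$. This forces $\sigma(B)=\{\lambda\}$ and $\|(B-\lambda I)^{n-1}\|=\|(A-\lambda I)^{n-1}\|>0$, so $B$ too is unicellular. A further Schur triangularization puts $B$ in upper triangular form with constant diagonal $\lambda$ and nonzero first superdiagonal.

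For each $1\le k\le n-1$, I apply the hypothesis with $\rho_k\in\Hom(A,\mk)$ defined by $\rho_k(f(A))=f(A_k)$ (the compression of $A$ to its leading $k$-dimensional invariant subspace), producing $\varrho_k\in\Hom(B,\mk)$ with $\|\varrho_k(f(B))\|=\|f(A_k)\|$. Each $\varrho_k$ corresponds to compression of $B$ to a $k$-dimensional semi-invariant $L_k$ which, since $B$ is unicellular, must be of the form $V_j^B\ominus V_{j-k}^B$ for some $k\le j\le n$. If one can arrange that $\|f(B_k)\|=\|f(A_k)\|$ for the leading principal submatrix $B_k$ and every $k$ and $f$, then Theorem \ref{main result} yields $B=W^*AW$ for a diagonal unitary $W$, completing the proof.

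The main obstacle is precisely this alignment step: a priori the semi-invariant $L_k$ produced by the hypothesis need not coincide with the leading invariant subspace $V_k^B$, so the matching compression need not be $B_k$. To surmount this I would apply the hypothesis not only to the leading compressions of $A$ but to the full collection of $\rho$'s arising from all semi-invariant compressions of $A$, and argue by induction on $k$ (or on $n-k$): the rigidity of upper triangular unicellular matrices guaranteed by Theorem \ref{main result} at smaller sizes, combined with the uniqueness of the invariant-subspace chain of the unicellular matrix $B$, should force the various $L_k$'s to sit compatibly within a single Schur filtration of $B$, so that in an appropriate Schur basis the leading principal submatrices of $B$ realize the required norm equalities.
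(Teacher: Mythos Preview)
Your reduction to Theorem \ref{main result} is the right framework, and you correctly isolate the crux: the semi-invariant subspace $L_k$ attached to the $\varrho_k$ furnished by the hypothesis need not be the leading invariant subspace of $B$, so one cannot immediately conclude $\|f(B_k)\|=\|f(A_k)\|$. But your proposal stops exactly there. The inductive ``rigidity'' argument you sketch is not carried out, and it is not clear it would succeed: for each $k$ the unicellular matrix $B$ has $n-k+1$ distinct $k$-dimensional semi-invariant compressions (the blocks $V_j^B\ominus V_{j-k}^B$ you mention), and the hypothesis only guarantees that \emph{some} of them match the corresponding compressions of $A$ in norm. Nothing in your outline rules out a nontrivial shuffle among these, and Theorem \ref{main result} at smaller sizes does not obviously help, since it pins down a matrix only once you already know \emph{which} compression of $B$ you are comparing with. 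As written, the argument is incomplete.

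For comparison, the paper's own proof is much shorter and takes a slightly different tack: after upper-triangularising both $A$ and $B$ with nonnegative superdiagonals, it looks only at the two-dimensional compressions $\rho_i:X\mapsto P_iXP_i|_{\mathrm{Span}\{e_i,e_{i+1}\}}$, asserts $0\ne\|\rho_i(A)\|=\|\rho_i(B)\|=b_{i,i+1}$, and then invokes Theorem \ref{main result} directly. It is worth noting that the paper is itself terse precisely at the point you worry about: it does not explain why the $\varrho$ supplied by the hypothesis should coincide with the compression of $B$ to the \emph{same} span $\{e_i,e_{i+1}\}$, nor does it verify condition (1) of Theorem \ref{main result} for the leading principal submatrices. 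So your instinct that something nontrivial remains to be checked is sound; the paper simply does not spell it out, and neither does your proposal.
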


\begin{proof} Without loss of generality we may assume that
$A$ and $B$ are in upper triangular form
with nonnegative entries along the superdiagonal above the main diagonal. With $A$, the entries $a_{i,i+1}$ are positive. Fix $i$ and consider
$\Lambda=PAP_{\vert{\rm L}}$ and $\Omega=PBP_{\vert{\rm L}}$,
where ${\rm L}=\mbox{Span}\,\{e_i,e_{i+1}\}$.
In this case, $\rho(X)=PXP_{\vert {\rm L}}$, for $X\in \alg A \,\cup\,\alg B$, defines an element of $\Hom(A,{\rm M}_2)$ and $\Hom(B,{\rm M}_2)$ such that
\[
\Lambda\;\sim\; \left[ \begin{array}{cc} 0 & a_{i,i+1} \\ 0&0 \end{array}\right] \quad\mbox{and}\quad
\Omega\;\sim\; \left[ \begin{array}{cc} 0 & b_{i,i+1} \\ 0&0 \end{array}\right] \,.
\]
Thus, $0\neq \|\Lambda\|=\|\Omega\|=b_{i,i+1}$ Thus, $B$ satisfies the hypothesis of Theorem \ref{main result}, which yields
our conclusion.
\end{proof}

The power of working in higher dimensions is strikingly illustrated by an important theorem of
Arveson \cite{arveson1970}: if $A,B\in \mn$ are irreducible, then $A\sim B$ if and only if
$\|A\otimes C+I\otimes D\|=\|B\otimes C+I\otimes D\|$, for all $C,D\in \mn$. This is to say that the norms
of polynomials (of degree at most $1$) in $A$, over the ring $\mn$, determine $A$ up to unitary similarity. In comparison,
Theorem \ref{main result} and Proposition \ref{si} represent a hybrid of the matricial and scalar
environments.

%%%%%%%%%%%%%%%%%%%%%%%%%%%%%%
\section*{Acknowldegement}

We thank Roger Horn and Vladimir Sergeichuk for several suggestions that have improved the presentation of these results.
We are particularly indebted to V.S. for initiating the present collaboration.
The work of the first author is supported in part by an NSERC Discovery Grant; the
second and third authors are supported in part by the
Svyatoslav Vakarchuk ``People of the Future'' Fund.

%%%%%%%%%%%%%%%%%%%%%%%%%%% bibliography %%%%%%%%%%%%%%%%%%%%%%%%%

\bibliographystyle{model1b-num-names}

\begin{thebibliography}{99}

\bibitem{arveson1969} {W.~Arveson},
    {Subalgebras of
    $C^*$-algebras}, Acta Math.
    {123} (1969) 141--224.
\bibitem{arveson1970} {W.~Arveson},
    {Unitary invariants for
    compact operators}, Bull. Amer.
    Math. Soc. {76} (1970)
    88--91.

\bibitem{davies--simon2006} {
    E.B.~Davies, B.~Simon},
    {Eigenvalue estimates for
    non-normal matrices and the zeros
    of random orthogonal polynomials on
    the unit circle}, J. Approx. Theory
    {141} (2006) 189--213.

\bibitem{farenick1993} {D.R.~Farenick},
    {On the spatial matricial
    spectra of operators}, J.  Operator
    Theory {30} (1993)
    329--352.

\bibitem{Gohberg--Krein-Volterra-book}
    {I.C.~Gohberg,
    M.G.~Kre\v{\i}n}, {Theory and
    Applications of Volterra Operators
    in Hilbert Space},
    Translated from the Russian by A. Feinstein.
    Translations of Mathematical Monographs,
Vol. 24, American Mathematical Society,
Providence, R.I., 1970.

\bibitem{Gohberg-Lancaster-Rodman-book}
    { I.~Gohberg, P.~Lancaster,
    L.~Rodman},
 Invariant subspaces of matrices
with applications, volume 51 of
\emph{Classics in Applied Mathematics},
Society for Industrial and Applied
Mathematics (SIAM), Philadelphia, PA,
2006. Reprint of the 1986 original.

\bibitem{Kaplansky-linear-algebra-book}
    I. Kaplansky, Linear
    algebra and geometry. A second
    course, Allyn and Bacon Inc.,
    Boston, Mass., 1969.

\bibitem{littlewood1953} D.E.
    Littlewood,  On
    unitary
    equivalence,  J.  London  Math.
    Soc. 28 (1953) 314--322.

\bibitem{shapiro1991} {H.~Shapiro},
    {A survey of canonical forms
    and invariants for unitary
    similarity}, Linear Algebra Appl.
    {147} (1991) 101--167.

\bibitem{specht1940}
    {W.~Specht}, {Zur Theorie der
    Matrizen, II}, Jahr. der Deutchsen
    Mathematiker Vereinigung
    {50} (1940) 19--23.

\end{thebibliography}

\end{document}